\newtheorem{theorem}{Theorem}
\newtheorem{lemma}[theorem]{Lemma}
\newtheorem{corollary}[theorem]{Corollary}
\newtheorem{proposition}[theorem]{Proposition}
\theoremstyle{definition}
\newtheorem{definition}[theorem]{Definition}
\thanks{This work was supported by the FWF Austrian Science Found (FWF Grant P18779-N13).}
\subjclass[2010]{20M13 (Primary), 13A05 (Secondary), 13F15 (Tertiary)}
\title[]{A characterization of arithmetical invariants by the monoid of relations}
\author{Andreas Philipp}
\address{Andreas Philipp\\University Graz\\Institute for Mathematics and Scientific Computing\\Heinrichstrasse 36\\8010 Graz\\Austria}
\email{andreas.philipp@uni-graz.at}
\begin{document}

\begin{abstract}
The investigation and classification of non-unique factorization phenomena have attracted some interest in recent literature. For finitely generated monoids, S.T. Chapman and P. García-Sánchez, together with several co-authors, derived a method to calculate the catenary and tame degree from the monoid of relations, and they applied this method successfully in the case of numerical monoids. In this paper, we investigate the algebraic structure of this approach. Thereby, we dispense with the restriction to finitely generated monoids and give applications to other invariants of non-unique factorizations, such as the elasticity and the set of distances.
\end{abstract}

\maketitle

\section{Introduction}
An integral domain (more generally, a commutative, cancellative monoid) is called atomic if every non-zero non-unit has a factorization into irreducible elements, and it is called factorial if this factorization is unique up to ordering and associates. Non-unique factorization theory is concerned with the description and classification of non-unique of factorization phenomena in atomic domains. It has its origin in algebraic number theory---the ring of integers of an algebraic number field being atomic but generally not factorial---but in the last decades it became an autonomous theory with many connections to zero-sum theory, commutative ring theory, module theory, and additive combinatorics. We refer to \cite{non-unique} for a recent presentation of the various aspects of the theory.
 
To describe these phenomena, various invariants have been studied in the literature, including the catenary degree, the tame degree, the elasticity, and the set of distances (for some new results, see, e.g., \cite{Ge-Gr-Sc11a} and \cite{B-C-R-S-S10}; for an overview of known results and additional references see, e.g., the monograph \cite{non-unique}; for a statement of the formal definitions, see section \ref{prelim}).

For an integral domain, non-unique factorization phenomena only concern the multiplicative monoid of that domain. Thus we will derive the theory for commutative, cancellative monoids, only.

The monoid of relations associated to a monoid and a certain invariant $\mu(\cdot)$ have been used to study the above mentioned invariants. Investigations of this type started only fairly recently. In \cite{MR1719711}, such investigations were carried out for finitely generated monoids using the results from \cite{MR2254337} and \cite{pre05509372}. In \cite{MR2243561} and \cite{Om}, these results, and expansions thereof, were applied in the investigation of numerical monoids, which are (certain) finitely generated submonoids of the non-negative integers; for a detailed exposition of the theory of numerical monoids and applications, see, e.g., the monograph \cite{pre05623301}.

In the present paper, we focus on the study of the algebraic structure of this method: i.e., the invariant $\mu(\cdot)$, its definition, and the monoid of relations. By this more algebraic-structural approach, we are able to extend the results to not necessarily finitely generated monoids. Furthermore, we address some new aspects. In particular, our investigations include the elasticity and the set of distances.

Moreover, these abstract characterizations, in particular Proposition \ref{pro2}, are used successfully for investigations on the arithmetic of non-principal orders of algebraic number fields in \cite{phil11}. Details however, are too involved to be included here. So the interested reader must be referred to a forthcoming paper dealing that subject.

\section{Preliminaries}
\label{prelim}

In this note, our notation and terminology will be consistent with \cite{non-unique}. Let $\mathbb N$ denote the set of positive integers and let $\mathbb N_0=\mathbb N\uplus\lbrace 0\rbrace$. For integers $n,\,m\in\mathbb N_0$, we set $[n,m]=\lbrace x\in\mathbb N_0\mid n\leq x\leq m\rbrace$. By convention, the supremum of the empty set is zero and we set $\frac{0}{0}=1$. The term ``monoid'' always means a commutative, cancellative semigroup with unit element. We will write all monoids multiplicatively. For a monoid $H$ we denote by $H^\times$ the set of invertible elements of $H$. We call $H$ reduced if $H^\times=\lbrace 1\rbrace$ and call $H_{\mathrm{red}}=H/H^\times$ the reduced monoid associated with $H$. Of course, $H_{\mathrm{red}}$ is always reduced, and the arithmetic of $H$ is determined by $H_{\mathrm{red}}$. Let $H$ be an atomic monoid. We denote by $\mathcal A(H)$ its set of atoms, by $\mathcal A(H_{\mathrm{red}})$ the set of atoms of $H_{\mathrm{red}}$, by $\mathsf Z(H)=\mathcal F(\mathcal A(H_{\mathrm{red}}))$ the free monoid with basis $\mathcal A(H_{\mathrm{red}})$, and by $\pi_H:\mathsf Z(H)\rightarrow H_{\mathrm{red}}$ the unique homomorphism such that $\pi_H|\mathcal A(H_{\mathrm{red}})=\mathrm{id}$. We call $\mathsf Z(H)$ the \emph{factorization monoid} and $\pi_H$ the \emph{factorization homomorphism} of $H$. For $a\in H$, we denote by $\mathsf Z(a)=\pi_H^{-1}(aH^\times)$ the \emph{set of factorizations} of $a$ and denote by $\mathsf L(a)=\lbrace |z|\mid z\in\mathsf Z(a)\rbrace$ the \emph{set of lengths} of $a$. 

In the following, we briefly recall the definitions of all the invariants of non-unique factorization to be dealt with in this paper.
\begin{definition}
Let $H$ be an atomic monoid.
For $a\in H$ we set
\[
\rho(a)=\frac{\sup\mathsf L(a)}{\min\mathsf L(a)},\:\mbox{ and we call }\:
\rho(H)=\sup\lbrace\rho(a)\mid a\in H\rbrace\:\mbox{ the \emph{elasticity} of }H.
\]
\end{definition}
\begin{definition}
Let $H$ be an atomic monoid.
For $a\in H$, the \emph{catenary degree} $\mathsf c(a)$ denotes the smallest $N\in\mathbb N_0\cup\lbrace\infty\rbrace$ with the following property:
\begin{itemize}
\item[] For any two factorizations $z,\,z'\in\mathsf Z(a)$ there exists a finite sequence of factorizations $(z_0,z_1,\ldots,z_k)$ in $\mathsf Z(a)$ such that $z_0=z,\,z_k=z'$, and $\mathsf d(z_{i-1},z_i)\leq N$ for all $i\in[1,k]$.
\end{itemize}
If this is the case, we say that $z$ and $z'$ can be concatenated by an $N$-chain.\\
Also, $\mathsf c(H)=\sup\lbrace\mathsf c(a)\mid a\in H\rbrace$ is called the \emph{catenary degree} of $H$.
\end{definition}

\begin{definition}
\label{def-tame}
Let $H$ be an atomic monoid.
For $a\in H$ and $x\in\mathsf Z(H)$, let $\mathsf t(a,x)$ denote the smallest $N\in\mathbb N_0\cup\lbrace\infty\rbrace$ with the following property:
\begin{itemize}
 \item[] If $\mathsf Z(a)\cap x\mathsf Z(H)\neq\emptyset$ and $z\in\mathsf Z(a)$, then there exists some $z'\in\mathsf Z(a)\cap x\mathsf Z(H)$ such that $\mathsf d(z,z')\leq N$.
\end{itemize}
For subsets $H'\subset H$ and $X\subset\mathsf Z(H)$, we define
\[
 \mathsf t(H',X)=\sup\lbrace\mathsf t(a,x)\mid a\in H',\,x\in X\rbrace,
\]
and we define $\mathsf t(H)=\mathsf t(H,\mathcal A(H_{\mathrm{red}}))$. This is called the \emph{tame degree} of $H$.
\end{definition}

\begin{definition}
Let $\emptyset\neq L\subset\mathbb N_0$ be a non-empty subset and $H$ an atomic monoid.
\begin{enumerate}
 \item A positive integer $d\in\mathbb N$ is called a \emph{distance} of $L$ if there exists some $l\in L$ such that $L\cap[l,l+d]=\lbrace l,l+d\rbrace$. We denote by $\triangle(L)$ the \emph{set of distances} of $L$. Note that $\triangle(L)=\emptyset$ if and only if $|L|=1$.
 \item We call
\[
 \triangle(H)=\bigcup_{a\in H}\triangle(\mathsf L(a))\subset\mathbb N
\]
the \emph{set of distances} of $H$.
\end{enumerate}
\end{definition}

\section{\boldmath $\mu(H)$}

\begin{definition}[$\mathcal R$-relation, cf. {\cite[end of page 3]{Om}}]
Let $H$ be an atomic monoid.
Two elements $z,\,z'\in\mathsf Z(H)$ are \emph{$\mathcal R$-related} if
\begin{itemize}
\item either $z=z'=1$
\item or there exists a finite sequence of factorizations $(z_0,z_1,\ldots,z_k)$ such that $z_0=z,\,z_k=z'$, $\pi_H(z)=\pi_H(z_i)$, and $\gcd(z_{i-1},z_i)\neq 1$ for all $i\in[1,k]$.
\end{itemize}
\end{definition}
We call this sequence an $\mathcal R$-chain concatenating $z$ and $z'$. If two elements $z,\,z'\in\mathsf Z(H)$ are $\mathcal R$-related, we write $z\approx z'$.\\
Since in our general setting the number of factorizations of an element $a\in H$ is not necessarily finite, the number of different $\mathcal R$-equivalence classes of $\mathsf Z(a)$ is potentially infinite, too.

\begin{definition}[$\mu(a),\,\mu(H)$, cf. {\cite[first paragraph, page 4]{Om}}]
Let $H$ be an atomic monoid.
For $a\in H$ let $\mathcal R_a$ denote the \emph{set of $\mathcal R$-equivalence classes of $\mathsf Z(a)$} and, for $\rho\in\mathcal R_a$, let $|\rho|=\min\lbrace|z|\mid z\in\rho\rbrace$. For $a\in H$, we set
\[
\mu(a)=\sup\lbrace |\rho|\mid \rho\in\mathcal R_a\rbrace\leq\sup\mathsf L(a)
\]
and define
\[
\mu(H)=\sup\lbrace\mu(a)\mid a\in H,\,|\mathcal R_a|\geq 2\rbrace.
\]
\end{definition}
Then $\mu(H)=0$ if and only if $|\mathcal R_a|=1$ for all $a\in H$.

\begin{lemma}
\label{lem0}
Let $H$ be an atomic monoid. Then
\[
\mu(H)\geq\mathsf c(H).
\]
\end{lemma}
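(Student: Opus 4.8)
The plan is to prove the equivalent inequality $\mathsf{c}(a)\le\mu(H)$ for every $a\in H$; taking the supremum over all $a\in H$ then yields $\mathsf{c}(H)\le\mu(H)$. If $\mu(H)=\infty$ there is nothing to prove, so assume $\mu(H)<\infty$, fix $a\in H$ and $z,z'\in\mathsf{Z}(a)$, and aim to produce a chain $z=z_0,z_1,\dots,z_k=z'$ in $\mathsf{Z}(a)$ with $\mathsf{d}(z_{i-1},z_i)\le\mu(H)$ for all $i\in[1,k]$. The argument will be inductive, the induction being carried out on the lengths of the factorizations to be joined.

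The first tool is a gcd-reduction. If $\gcd(z,z')=g\ne 1$, write $z=gx$, $z'=gy$ with $\gcd(x,y)=1$; then $x,y$ are factorizations of the element $b\in H$ with $\pi_H(b)=\pi_H(x)$, which is a proper divisor of $\pi_H(a)$ in $H_{\mathrm{red}}$, and $|x|+|y|<|z|+|z'|$. Since $\mathsf{d}(gw,gw')=\mathsf{d}(w,w')$ for all $w,w'\in\mathsf{Z}(H)$, a $\mu(H)$-chain from $x$ to $y$, furnished by induction, yields after multiplication by $g$ a $\mu(H)$-chain from $z$ to $z'$. Hence we may assume $\gcd(z,z')=1$; and if moreover $z\ne z'$, then $|z|,|z'|\ge 2$, since a length-one factorization would force $a$ to be an atom and $\mathsf{Z}(a)=\{z\}$.

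Now let $\gcd(z,z')=1$ and $z\ne z'$. Suppose first that $z$ and $z'$ lie in different $\mathcal{R}$-classes of $\mathsf{Z}(a)$; then $|\mathcal{R}_a|\ge 2$, so $\mu(a)\le\mu(H)$, whence $\min\mathsf{L}(a)\le\mu(a)\le\mu(H)$ and every $\mathcal{R}$-class of $\mathsf{Z}(a)$ has a representative of length at most $\mu(H)$. Consequently the set $S=\{w\in\mathsf{Z}(a):|w|\le\mu(H)\}$ is non-empty, meets every $\mathcal{R}$-class, and any $u,v\in S$ satisfy $\mathsf{d}(u,v)\le\max\{|u|,|v|\}\le\mu(H)$, so they are joined by a single-step $\mu(H)$-chain. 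It therefore suffices to join $z$, and likewise $z'$, to a representative of its own $\mathcal{R}$-class lying in $S$ — which is a factorization strictly shorter than $z$ (resp.\ $z'$) as soon as $z$ (resp.\ $z'$) does not itself belong to $S$. If instead $z$ and $z'$ lie in the same $\mathcal{R}$-class, pick an $\mathcal{R}$-chain $z=w_0,w_1,\dots,w_k=z'$ with $\gcd(w_{i-1},w_i)\ne 1$ throughout and refine it link by link: each link $(w_{i-1},w_i)$ has non-trivial gcd, so the gcd-reduction applies and replaces it, by induction, with a $\mu(H)$-chain. Both cases thus come down to joining two $\mathcal{R}$-equivalent factorizations of a single element.

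The genuinely delicate point is to make this refinement compatible with the induction, since an $\mathcal{R}$-chain joining $z$ and $z'$ may a priori run through factorizations far longer than $\max\{|z|,|z'|\}$, so a naive induction on $|z|+|z'|$ does not close. The remedy is to organize the whole argument around connections to minimal-length factorizations: one shows separately that every $z\in\mathsf{Z}(a)$ is joined by a $\mu(H)$-chain to some $z^{*}\in\mathsf{Z}(a)$ with $|z^{*}|=\min\mathsf{L}(a)$, and that any two minimal-length factorizations of $a$ are $\mu(H)$-joined, running a two-parameter induction — on the lengths of the factorizations to be joined and on $\min\mathsf{L}$ of the element involved — and exploiting that a gcd-reduction strictly lowers $\min\mathsf{L}$ whenever one of the two factorizations is minimal-length (the quotient is then again minimal-length for the smaller element). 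Checking that the $\mathcal{R}$-chain refinement never escapes the range controlled by these parameters is the technical heart of the proof; the remaining ingredients — invariance of $\mathsf{d}$ under multiplication by a fixed factorization, and the ``$\mu(H)$-clique'' property of $S$ — are routine.
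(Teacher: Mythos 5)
Your overall strategy coincides with the paper's: for $z\not\approx z'$ you pass to length-minimal representatives of the two $\mathcal R$-classes, which are coprime and hence at distance $\max\{|z''|,|z'''|\}\le\mu(a)\le\mu(H)$ from one another, and for $z\approx z'$ you refine an $\mathcal R$-chain link by link using gcd-cancellation and an induction on lengths. The pieces you call routine (the invariance $\mathsf d(gw,gw')=\mathsf d(w,w')$, the clique property of the set $S$ of short factorizations, $\min\mathsf L(a)\le\mu(a)\le\mu(H)$ when $|\mathcal R_a|\ge 2$) are all correct.

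Nevertheless the proposal has a genuine gap, and you name it yourself: ``checking that the $\mathcal R$-chain refinement never escapes the range controlled by these parameters is the technical heart of the proof,'' and that check is never performed. The difficulty is real: the interior terms $w_1,\dots,w_{k-1}$ of an $\mathcal R$-chain joining $z$ and $z'$ are merely factorizations of $a$, so their lengths are not bounded by $\max\{|z|,|z'|\}$ and they need not be of minimal length. Consequently neither of your two induction parameters is guaranteed to decrease when you apply the gcd-reduction to an interior link $(w_{i-1},w_i)$: the lengths $|g_i^{-1}w_{i-1}|$, $|g_i^{-1}w_i|$ can exceed $|z|$ and $|z'|$, your stated mechanism (gcd-reduction lowers $\min\mathsf L$ when one of the two factorizations is length-minimal) simply does not apply to such links, and $\min\mathsf L$ is not monotone under divisibility, so passing to the cancelled element can raise it. The two auxiliary claims you propose to prove instead (every $z$ is $\mu(H)$-joined to a minimal-length factorization; any two minimal-length factorizations are $\mu(H)$-joined) are each essentially as hard as the lemma itself and are left unproved. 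To your credit, the same issue is passed over silently in the paper's own proof, which inducts on a common length bound $N$ for $z,z'$ and applies the hypothesis to $g_i^{-1}z_{i-1}$, $g_i^{-1}z_i$ without verifying that the interior $z_i$ have length at most $N$; when all sets $\mathsf L(a)$ are finite one can close both arguments by inducting on $\sup\mathsf L(a)$ instead, since every term of the $\mathcal R$-chain lies in $\mathsf Z(a)$ and cancelling $g_i\ne 1$ strictly decreases $\sup\mathsf L$. But flagging the obstacle is not the same as overcoming it: as submitted, your argument is incomplete at exactly the step that carries the content of the lemma.
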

\begin{proof}
We show that, for all $N\in\mathbb N_0$, all $a\in H$, and all factorizations $z,\,z'\in\mathsf Z(a)$ with $|z|\leq N$ and $|z'|\leq N$, there is a $\mu(H)$-chain from $z$ to $z'$. We proceed by induction on $N$. If $N=0$, then $z=z'=1$ and $\mathsf d(z,z')=0\leq\mu(H)$. Suppose $N\geq 1$ and that, for all $a\in H$ and all $z,\,z'\in\mathsf Z(a)$ with $|z|<N$ and $|z'|<N$, there is a $\mu(H)$-chain from $z$ to $z'$. Now let $a\in H$ and let $z,\,z'\in\mathsf Z(a)$ with $|z|\leq N$ and $|z'|\leq N$. If $z\not\approx z'$, then there are $z'',\,z'''\in\mathsf Z(a)$ such that $z''\approx z$, $z'''\approx z'$, and $z''$ and $z'''$ are minimal in their $\mathcal R$-classes with respect to their lengths. Since $\gcd(z'',z''')=1$, we find $\mathsf d(z'',z''')=\max\lbrace |z''|,|z'''|\rbrace\leq\mu(a)\leq\mu(H)$. Now it remains to show that, for any two factorizations $z,\,z'\in\mathsf Z(a)$ with $z\approx z'$, $|z|\leq N$, and $|z'|\leq N$, there is a $\mu(H)$-chain concatenating them. By definition, there is an $\mathcal R$-chain $z_0,\ldots,z_k$ with $z=z_0$, $z'=z_k$, and $g_i=\gcd(z_{i-1},z_i)\neq 1$ for all $i\in[1,k]$. By the induction hypothesis, there is a $\mu(H)$-chain from $g_i^{-1}z_{i-1}$ to $g_i^{-1}z_i$ for all $i\in[1,k]$, and thus there is a $\mu(H)$-chain from $z_i$ to $z_{i-1}$ for $i\in[1,k]$; thus there is a $\mu(H)$-chain from $z$ to $z'$.
\end{proof}

The following Proposition \ref{pro1} is based on the second part of the proof of \cite[Theorem 3.1]{MR2243561}.
\begin{proposition}
\label{pro1}
Let $H$ be an atomic monoid and $a\in H$ with $|\mathcal{R}_a|\geq 2$. Then
\[
\mathsf c(a)\geq\mu(a).
\]
In particular, $\mathsf c(H)\geq\mu(H)$.
\end{proposition}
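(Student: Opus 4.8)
The plan is to look at the first moment at which a chain of factorizations of $a$ leaves a fixed $\mathcal R$-class. If $\mathsf c(a)=\infty$ there is nothing to prove, so assume $\mathsf c(a)<\infty$. I would first record that every value $|\rho|$ with $\rho\in\mathcal R_a$ is a non-negative integer, so if $\mu(a)<\infty$ the supremum in its definition is actually a maximum: there is a class $\rho\in\mathcal R_a$ with $|\rho|=\mu(a)$. (If $\mu(a)=\infty$, the same argument run with classes of arbitrarily large $|\rho|$ forces $\mathsf c(a)=\infty$, contradicting our assumption; so this case does not occur once $\mathsf c(a)<\infty$ --- or one simply notes that both sides are then infinite.) Since $|\mathcal R_a|\ge 2$, choose a second class $\rho'\neq\rho$, a factorization $z\in\rho$ with $|z|=\mu(a)$, and any $z'\in\rho'$.

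Because $\mathsf c(a)<\infty$, there is a $\mathsf c(a)$-chain $z=z_0,z_1,\dots,z_k=z'$ of factorizations of $a$ with $\mathsf d(z_{i-1},z_i)\le\mathsf c(a)$ for every $i\in[1,k]$. As $z_0=z\in\rho$ while $z_k=z'\notin\rho$, there is a least index $j\in[1,k]$ with $z_j\notin\rho$, and then $z_{j-1}\in\rho$. The crucial point --- and essentially the only nontrivial step --- is that $\gcd(z_{j-1},z_j)=1$: if not, then $(z_{j-1},z_j)$ would already be an $\mathcal R$-chain, so $z_{j-1}\approx z_j$, and since $\rho$ is precisely the $\mathcal R$-class containing $z_{j-1}$ this would give $z_j\in\rho$, contradicting the choice of $j$.

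Granting $\gcd(z_{j-1},z_j)=1$, the conclusion follows by the same distance computation already used in the proof of Lemma \ref{lem0}: $\mathsf d(z_{j-1},z_j)=\max\{|z_{j-1}|,|z_j|\}\ge|z_{j-1}|$, and $z_{j-1}\in\rho$ forces $|z_{j-1}|\ge|\rho|=\mu(a)$, whence $\mu(a)\le\mathsf d(z_{j-1},z_j)\le\mathsf c(a)$. For the ``in particular'' assertion I would then take the supremum over all $a\in H$ with $|\mathcal R_a|\ge 2$; if there are no such $a$, then $\mu(H)=\sup\emptyset=0\le\mathsf c(H)$. I do not anticipate a real obstacle: the content is the coprimality observation at the transition index, while the finiteness and attainment bookkeeping for $\mu(a)$ is routine given that the quantities $|\rho|$ are integers.
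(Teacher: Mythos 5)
Your proof is correct and takes essentially the same route as the paper's: both select a length-minimal representative $z$ of a class realizing (or approaching) $\mu(a)$, follow a $\mathsf c(a)$-chain from $z$ to a factorization in a different $\mathcal R$-class, and observe that at the first index where the chain leaves the class the two consecutive factorizations are coprime, so their distance is at least $|z|$. The only cosmetic difference is that the paper quantifies over arbitrary $N\le\mu(a)$ rather than separating the finite and infinite cases and invoking attainment of the supremum.
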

\begin{proof}
Let $a\in H$ be such that $|\mathcal R_a|\geq 2$, let $N\in\mathbb N_0$ be such that $\mu(a)\geq N$, and let $z,\,z'\in\mathsf Z(a)$ be such that $z\not\approx z'$, $|z|\geq N$, and $z$ is minimal in its $\mathcal R$-equivalence class with respect to its length. 
There exists a $\mathsf c(a)$-chain of factorizations $z_0,\ldots,z_k$ with $z_0=z$ and $z_k=z'$. As $z\not\approx z'$, there exists some $i\in[1,k]$ minimal such that $z\approx z_j$ for all $j<i$ and $z\not\approx z_i$; then clearly $z_{i-1}\not\approx z_i$, and therefore $\gcd(z_{i-1},z_i)=1$; thus $\mathsf d(z_{i-1},z_i)=\max\lbrace |z_{i-1}|,|z_i|\rbrace$. Since $\mu(a)=|z_0|$, $z_0$ is minimal in its $\mathcal R$-class with respect to its length by definition. Thus we have $|z_0|\leq |z_{i-1}|$.
Then we obtain $N\leq|z|=|z_0|\leq\max\lbrace |z_{i-1}|,|z_i|\rbrace =\mathsf d(z_{i-1},z_i)\leq\mathsf c(a)$. As $N$ was arbitrary, the assertion follows.
\end{proof}

Now we get the result from \cite[Theorem 3.1]{MR2243561} in our slightly more general setup.
\begin{corollary}
\label{cor1}
Let $H$ be an atomic monoid. Then
\[
\mathsf c(H)=\mu(H).
\]
\end{corollary}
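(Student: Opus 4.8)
The plan is simply to combine the two inequalities already in hand. Lemma~\ref{lem0} gives the inequality $\mu(H) \geq \mathsf{c}(H)$, and Proposition~\ref{pro1}---more precisely its ``in particular'' clause, obtained by taking suprema in the pointwise estimate $\mathsf{c}(a) \geq \mu(a)$ over all $a \in H$ with $|\mathcal{R}_a| \geq 2$---gives the reverse inequality $\mathsf{c}(H) \geq \mu(H)$. Putting the two together yields $\mathsf{c}(H) = \mu(H)$, which is exactly the assertion. Thus the corollary needs no new argument beyond citing the two preceding results, and its proof will be a single line.

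The one point worth a moment's reflection is the degenerate case $\mu(H) = 0$, i.e.\ when $|\mathcal{R}_a| = 1$ for every $a \in H$, so that the index set $\{a \in H : |\mathcal{R}_a| \geq 2\}$ defining $\mu(H)$ is empty (recall $\sup\emptyset = 0$). Here Proposition~\ref{pro1} carries no content, but its conclusion $\mathsf{c}(H) \geq \mu(H) = 0$ holds trivially, while Lemma~\ref{lem0} still yields $0 = \mu(H) \geq \mathsf{c}(H)$, forcing $\mathsf{c}(H) = 0 = \mu(H)$. So equality persists, and no separate case distinction is actually required.

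I do not anticipate any genuine obstacle: the substantive work was already done in the proof of Lemma~\ref{lem0} (the nontrivial direction, an induction on the maximal length of the factorizations involved that reduces an $\mathcal{R}$-chain to $\mu(H)$-chains by cancelling common divisors) and in the proof of Proposition~\ref{pro1} (selecting a factorization minimal in its $\mathcal{R}$-class and isolating the first step of a $\mathsf{c}(a)$-chain that leaves that class, which is forced to have large $\mathsf d$-distance). Corollary~\ref{cor1} merely records the conjunction of these two statements.

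\begin{proof}
By Lemma~\ref{lem0} we have $\mu(H) \geq \mathsf{c}(H)$, and by Proposition~\ref{pro1} we have $\mathsf{c}(H) \geq \mu(H)$. Hence $\mathsf{c}(H) = \mu(H)$.
\end{proof}
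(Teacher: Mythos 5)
Your proposal is correct and is exactly the paper's own argument: the paper's proof of Corollary~\ref{cor1} is simply ``Clear by Lemma~\ref{lem0} and Proposition~\ref{pro1}.'' Your extra remark on the degenerate case $\mu(H)=0$ is a sensible sanity check but adds nothing beyond what the two cited results already give.
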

\begin{proof}
Clear by Lemma \ref{lem0} and Proposition \ref{pro1}.
\end{proof}

\section{\boldmath The monoid of relations $M_H$}

\begin{definition}
Let $H$ be an atomic monoid. We call
\[
M_H=\lbrace (x,y)\in\mathsf Z(H)\times\mathsf Z(H)\mid\pi_H(x)=\pi_H(y)\rbrace,
\]
the \emph{monoid of relations}.\\
$M_H$, as defined, is the monoid of relations of $H_{\mathrm{red}}$.
\end{definition}

\begin{lemma}
\label{lem1}
Let $H$ be an atomic monoid, $\mathcal P\subset H_{\mathrm{red}}$ be the set of prime elements of $H_{\mathrm{red}}$, and $T=\mathcal A(H_{\mathrm{red}})\setminus \mathcal P$.
\begin{enumerate}
 \item $M_H=\lbrace(qx,qy)\mid q\in\mathcal F(\mathcal P),\,x,y\in\mathcal F(T)\rbrace$ and for all $q\in\mathcal F(\mathcal P)$ and $x,\,y\in\mathsf Z(H)$ we have $(qx,qy)\in M_H$ if and only if $(x,y)\in M_H$. 
 \item \label{lem1.1} The homomorphism $\varphi:M_H\rightarrow\mathcal F(\mathcal P)\times\mathcal F(T)\times\mathcal F(T)$, $\varphi((qx,qy))=(q,x,y)$ with $q\in\mathcal F(\mathcal P)$ and $x,\,y\in\mathcal F(T)$ is a divisor theory.
 \item \label{lem1.2} $M_H$ is a Krull monoid with class group $\mathsf q([T])$, where $\mathsf q([T])$ denotes the quotient group of the monoid generated by the elements in $T$, and the set of all classes containing primes is given by $\lbrace v,v^{-1}\mid v\in T\rbrace\cup\lbrace 1\rbrace$ if $\mathcal P\neq\emptyset$, i.e. $H$ posseses at least one prime element, and by $\lbrace v,v^{-1}\mid v\in T\rbrace$ otherwise.\\
In particular, the set of classes containing primes is finite if and only if $T$ is finite.
\end{enumerate}
\end{lemma}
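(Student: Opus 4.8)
The plan is to prove the three assertions in turn, the organizing principle throughout being the canonical identification $\mathsf Z(H)=\mathcal F(\mathcal A(H_{\mathrm{red}}))=\mathcal F(\mathcal P)\times\mathcal F(T)$; this is available because a prime element of a reduced monoid is an atom, so that $\mathcal A(H_{\mathrm{red}})=\mathcal P\uplus T$. For (1) the crucial ingredient is a valuation argument. For $p\in\mathcal P$ let $\mathsf v_p(a)$ be the largest $n\in\mathbb N_0$ with $p^n\mid a$ in $H_{\mathrm{red}}$; cancellativity together with the defining property of a prime makes $\mathsf v_p$ additive, and $\mathsf v_p(a)=1$ if $a=p$ while $\mathsf v_p(a)=0$ for every atom $a\neq p$ (two associate atoms of a reduced monoid are equal). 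Hence for $z\in\mathsf Z(H)$ the value $\mathsf v_p(\pi_H(z))$ equals the multiplicity of $p$ in $z$. Consequently, if $(X,Y)\in M_H$, comparing $\mathsf v_p$ on both sides of $\pi_H(X)=\pi_H(Y)$ for all $p\in\mathcal P$ forces $X$ and $Y$ to have the same $\mathcal F(\mathcal P)$-component $q$; writing $X=qx$, $Y=qy$ with $x,y\in\mathcal F(T)$ and cancelling $\pi_H(q)$ gives $\pi_H(x)=\pi_H(y)$, i.e. $(x,y)\in M_H$. The displayed description follows, and the stated equivalence is merely cancellation of $\pi_H(q)$.

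For (2): by (1) the map $\varphi$ is well defined, and it is visibly a homomorphism. To see it is a divisor homomorphism, suppose $\varphi((q_1x_1,q_1y_1))\mid\varphi((q_2x_2,q_2y_2))$ in the free monoid, i.e. $q_1\mid q_2$, $x_1\mid x_2$, $y_1\mid y_2$; then $(q_2q_1^{-1}x_2x_1^{-1},\,q_2q_1^{-1}y_2y_1^{-1})$ lies in $\mathsf Z(H)\times\mathsf Z(H)$, lies in $M_H$ because $\pi_H(x_i)=\pi_H(y_i)$ for $i=1,2$ (pass to $\mathsf q(H_{\mathrm{red}})$), and exhibits the required divisibility in $M_H$; injectivity of $\varphi$ then follows since $M_H$ is reduced. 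It remains to realize every basis element of $\mathcal F(\mathcal P)\times\mathcal F(T)\times\mathcal F(T)$ as a gcd of finitely many $\varphi$-images. For $(p,1,1)$ with $p\in\mathcal P$, the single relation $(p,p)\in M_H$ already has $\varphi$-image $(p,1,1)$. For $(1,t,1)$ with $t\in T$ I use that $t$ is not prime: choose $b,c\in H_{\mathrm{red}}$ with $t\mid bc$ but $t\nmid b$ and $t\nmid c$, write $bc=te$, and pick factorizations $z_b,z_c,z_e$; then $(tz_e,z_bz_c)\in M_H$ and $t\nmid z_bz_c$, so together with $(t,t)\in M_H$ (whose image is $(1,t,t)$) a componentwise gcd computation gives $\gcd\bigl(\varphi((tz_e,z_bz_c)),\varphi((t,t))\bigr)=(1,t,1)$; the case $(1,1,t)$ is symmetric. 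Hence $\varphi$ is a divisor theory.

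For (3): by (2) and the characterization of Krull monoids as those admitting a divisor theory, $M_H$ is Krull, its class group is $\mathsf q(\mathcal F(\mathcal P)\times\mathcal F(T)\times\mathcal F(T))/\mathsf q(\varphi(M_H))$, and the classes containing primes are the images of the basis elements. By (1), $\mathsf q(\varphi(M_H))=\mathsf q(\mathcal F(\mathcal P))\times\{(u,v)\in\mathsf q(\mathcal F(T))^{2}\mid\bar\pi_H(u)=\bar\pi_H(v)\}$, where $\bar\pi_H:\mathsf q(\mathcal F(T))\to\mathsf q(H_{\mathrm{red}})$ is induced by $\pi_H$ and has image $\mathsf q([T])$. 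The $\mathsf q(\mathcal F(\mathcal P))$-factors cancel, and the surjection $\mathsf q(\mathcal F(T))^{2}\to\mathsf q([T])$, $(u,v)\mapsto\bar\pi_H(uv^{-1})$, has exactly that subgroup as kernel; so the class group is $\mathsf q([T])$. Tracing the basis elements through this isomorphism: each $(p,1,1)$ with $p\in\mathcal P$ is sent to $1$, each $(1,t,1)$ to $t\in T\subseteq\mathsf q([T])$, and each $(1,1,t)$ to $t^{-1}$; since every atom of $H_{\mathrm{red}}$ stays $\neq 1$ in $\mathsf q(H_{\mathrm{red}})$, this yields precisely the asserted set of prime classes, and the injectivity of $v\mapsto v$ on $T$ then gives the final finiteness statement.

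I expect the only genuinely delicate step to be the gcd condition in (2) for the basis elements $(1,t,1)$ with $t$ a non-prime atom: there one must manufacture a relation of $M_H$ in which $t$ occurs in a factorization on exactly one side, which is precisely the place where non-primality of $t$ is used. The rest of (2), and all of (3), are then routine once (1) and its valuation bookkeeping are in place.
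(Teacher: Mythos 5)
Your proof is correct and follows essentially the same route as the paper: the decomposition $\mathsf Z(H)=\mathcal F(\mathcal P)\times\mathcal F(T)$, the same divisor-homomorphism verification, the same use of non-primality of $t$ to realize $(1,t,1)$ as a gcd of two images, and the same identification of the class group with $\mathsf q([T])$ (you compute the quotient group directly where the paper cites a standard result, and you spell out the valuation bookkeeping and the injectivity of $T\hookrightarrow\mathsf q([T])$, which the paper leaves implicit). No gaps.
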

\begin{proof}\ 
\begin{enumerate}
 \item Obiously, we have $\mathsf Z(H)=\mathcal F(\mathcal P)\times\mathcal F(T)$. Let $(qx,q'y)\in\mathsf Z(H)\times\mathsf Z(H)$ with $q,\,q'\in\mathcal F(\mathcal P)$ and $x,\,y\in\mathcal F(T)$. Then $(qx,q'y)\in M_H$ if and only if $\pi_H(qx)=\pi_H(q'y)$. Since $q,\,q'$ are products of prime elements we find $q=q'$, and thus $\pi_H(x)=\pi_H(y)$.
 \item First we show that $\varphi$ is a divisor homomorphism. Let $(q_1x_1,q_1y_1),\,(q_2x_2,q_2y_2)\in M_H$ be such that $\varphi(q_1x_1,q_1y_1)=(q_1,x_1,y_1)\mid(q_2,x_2,y_2)=\varphi(q_2x_2,q_2y_2)$ in $\mathcal F(\mathcal P)\times\mathcal F(T)\times\mathcal F(T)$. Then there exists $(q,x,y)\in\mathcal F(\mathcal P)\times\mathcal F(T)\times\mathcal F(T)$ such that $(q_1,x_1,y_1)(q,x,y)=(q_2,x_2,y_2)$. Now we apply $\pi_H$ and find
\[
\pi_H(y_1)\pi_H(x)=\pi_H(x_1)\pi_H(x)=\pi_H(x_1x)=\pi_H(x_2)=\pi_H(y_2)=\pi_H(y_1y)=\pi_H(y_1)\pi_H(y).
\]
Thus $\pi_H(x)=\pi_H(y)$, and therefore $(qx,qy)\in M_H$ and $(q_1x_1,q_1y_1)\mid (q_2x_2,q_2y_2)$ in $M_H$.\\
Now we prove that $\varphi$ is a divisor theory. Since $\mathcal F(\mathcal P)\times\mathcal F(T)\times\mathcal F(T)=\mathcal F(U)$ with $U=\lbrace (p,1,1)\mid p\in \mathcal P\rbrace\cup\lbrace (1,t,1),(1,1,t)\mid t\in T\rbrace$, we must show that any elment of $U$ is the greatest common divisor of the image of a finite subset of $M_H$.
Let $(p,1,1)\in U$. Since $\varphi(p,p)=(p,1,1)$, we are done.
Let $u\in\mathcal A(H)$ be not prime such that $(1,uH^\times,1)\in U$. Since $u\in\mathcal A(H)$ is not prime, there are $a,\,b\in H\setminus H^\times$ not divisible by any prime such that $u\mid ab$ but $u\nmid a$ and $u\nmid b$. Now let $z\in\mathsf Z(u^{-1}ab)$, $x\in\mathsf Z(a)$, and $y\in\mathsf Z(b)$ with $uH^\times\nmid xy$. Then we find $(1,uH^\times,1)=\gcd(\varphi(zuH^\times,xy),\varphi(uH^\times,uH^\times))$.
 \item It is clear by part \ref{lem1.1} and \cite[Theorem 2.4.8.1]{non-unique} that $M_H$ is a Krull monoid. Now we compute its class group. We define the map
\[
\phi:\left\lbrace
\begin{array}{ccc}
\mathcal F(\mathcal P)\times\mathcal F(T)\times\mathcal F(T) & \rightarrow & \mathsf q([T]) \\
(q,x,y) & \mapsto & \pi_H(x)(\pi_H(y))^{-1}.
\end{array}
\right.
\]
Obviously, $\phi$ is a well-defined monoid homomorphism and $\phi$ is surjective. By \cite[Proposition 2.5.1.4]{non-unique}, it is sufficient to show that $\phi^{-1}(1)=\varphi(M_H)$ in order to prove that the class group of $M_H$ equals $\mathsf q([T])$. Now let $(q,x,y)\in\mathcal F(\mathcal P)\times\mathcal F(T)\times\mathcal F(T)$ be such that $\phi(q,x,y)=1$. Then we find
\[
\phi(q,x,y)=\pi_H(x)\pi_H(y)^{-1}=1
\quad\Leftrightarrow\quad
\pi_H(x)=\pi_H(y)
\quad\Leftrightarrow\quad
(x,y)\in M_H
\quad\Leftrightarrow\quad
(qx,qy)\in M_H,
\]
and we are done.
For the last part of the proof, we calculate the set of all classes containing prime elements of $\mathcal F(\mathcal P)\times\mathcal F(T)\times\mathcal F(T)$. We have $\mathcal F(\mathcal P)\times\mathcal F(T)\times\mathcal F(T)=\mathcal F(U)$ with $U=\lbrace (p,1,1)\mid p\in \mathcal P\rbrace\cup\lbrace (1,t,1),(1,1,t)\mid t\in T\rbrace$ and find $\lbrace v,v^{-1}\mid v\in T\rbrace\cup\lbrace 1\rbrace$ if $\mathcal P\neq\emptyset$ and $\lbrace v,v^{-1}\mid v\in T\rbrace$ otherwise.
\qedhere
\end{enumerate}
\end{proof}

As we saw in the proof of Lemma \ref{lem1}.\ref{lem1.1} every element of $\mathsf Z(H)\times\mathsf Z(H)$ can be written as greatest common divisor of the image of at most two elements from $M_H$. In the literature, such a Krull monoid is called a $\delta_1$-semigroup with divisor theory; for reference, see \cite{Sk70} and \cite{Sk81}.

\begin{lemma}
\label{lemX}
Let $H$ be an atomic monoid. Then
\[
 \mathcal A(M_H)\subset\lbrace(uH^\times,uH^\times)\mid u\in\mathcal A(H)\rbrace\cup\lbrace(x,y)\in M_H\mid \gcd(x,y)=1\rbrace.
\]
\end{lemma}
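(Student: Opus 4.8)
The plan is to take an atom $(x,y)\in\mathcal A(M_H)$ and to show that if $\gcd(x,y)\neq 1$, then necessarily $(x,y)=(uH^\times,uH^\times)$ for some $u\in\mathcal A(H)$; atoms with $\gcd(x,y)=1$ already lie in the second set on the right-hand side by definition, so this is enough. A preliminary remark I would record is that $M_H$ is reduced: $\mathsf Z(H)$ is free and hence reduced, so $\mathsf Z(H)\times\mathsf Z(H)$ is reduced, and any unit of the submonoid $M_H$ is in particular a unit of $\mathsf Z(H)\times\mathsf Z(H)$, thus equal to $(1,1)$. This is what makes the notion of an atom of $M_H$ behave as expected.

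Now assume $g=\gcd(x,y)\neq 1$. Since $g$ divides both $x$ and $y$ in the free monoid $\mathsf Z(H)$, write $x=gx'$ and $y=gy'$ with $x',y'\in\mathsf Z(H)$. From $\pi_H(g)\pi_H(x')=\pi_H(x)=\pi_H(y)=\pi_H(g)\pi_H(y')$ and the cancellativity of $H_{\mathrm{red}}$ we obtain $\pi_H(x')=\pi_H(y')$, so $(x',y')\in M_H$; trivially $(g,g)\in M_H$ as well. Hence $(x,y)=(g,g)(x',y')$ in $M_H$. Since $(x,y)$ is an atom and $(g,g)\neq(1,1)$, the factor $(x',y')$ must equal $(1,1)$, which forces $x=y=g$.

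It remains to show $|g|=1$ in $\mathsf Z(H)=\mathcal F(\mathcal A(H_{\mathrm{red}}))$. If $|g|\geq 2$, write $g=vw$ with $v,w\in\mathsf Z(H)\setminus\lbrace 1\rbrace$; then $(g,g)=(v,v)(w,w)$ displays $(x,y)=(g,g)$ as a product of two non-units of $M_H$, contradicting atomicity. Therefore $g\in\mathcal A(H_{\mathrm{red}})$, i.e.\ $g=uH^\times$ for some $u\in\mathcal A(H)$, and $(x,y)=(uH^\times,uH^\times)$. I do not expect a genuine obstacle here; the two points deserving a moment's care are the verification that $(x',y')\in M_H$ — which is precisely where the cancellativity of $H_{\mathrm{red}}$ enters — and the bookkeeping that $M_H$ has only the trivial unit.
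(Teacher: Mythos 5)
Your proof is correct and follows essentially the same route as the paper's: both hinge on the decomposition $(x,y)=(g,g)(xg^{-1},yg^{-1})$ in $M_H$ and on splitting $g$ into atoms of the free monoid $\mathsf Z(H)$ to force $|g|=1$. Your added checks that $M_H$ is reduced and that $(x',y')\in M_H$ (via cancellativity of $H_{\mathrm{red}}$) are details the paper leaves implicit, but the argument is the same.
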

\begin{proof}
Let $(x,y)\in\mathcal A(M_H)$ and $z=\gcd(x,y)$. If $z=1$, we are done. Now assume $z\neq 1$. Then $z=u_1\cdot\ldots\cdot u_k$ for some $k\in\mathbb N$ and $u_1,\ldots,u_k\in\mathcal A(H_{\mathrm{red}})$. Now we find $(x,y)=(z,z)(xz^{-1},yz^{-1})=(u_1,u_1)\cdot\ldots\cdot(u_k,u_k)(xz^{-1},yz^{-1})$. If $k\geq 2$, then $(x,y)\notin\mathcal A(M_H)$, a contradiction. If $k=1$, then $(x,y)\in\mathcal A(M_H)$ implies $(xz^{-1},yz^{-1})=(1,1)$, that is, $x=z=y=u_1\in\mathcal A(H_{\mathrm{red}})$.
\end{proof}

\begin{definition}
Let $H$ be an atomic monoid and $M_H$ its monoid of relations. For $(x,y)\in M_H$ and $X\subset M_H$, we set
\[
\widetilde\triangle(x,y)=\big| |x|-|y|\big|\,\mbox{ and }\,\widetilde\triangle(X)=\lbrace\widetilde\triangle(x,y)\mid(x,y)\in X,x\neq y\rbrace.
\]
\end{definition}

Now we can prove something like \cite[Proposition 3.2]{MR2243561} for the catenary degree and a similar result for the elasticity and the set of distances.

\begin{proposition}
\label{pro2}
Let $H$ be an atomic monoid.
\begin{enumerate}
 \item \label{pro2.1} $\mathsf c(H)\leq\sup\lbrace |x|\mid (x,y)\in\mathcal A(M_H)\rbrace$.
 \item \label{pro2.2} $\rho(H)=\sup\left\lbrace\left.\frac{|x|}{|y|}\right|(x,y)\in M_H\right\rbrace=\sup\left\lbrace\left.\frac{|x|}{|y|}\right|(x,y)\in\mathcal A(M_H)\right\rbrace$.
 \item \label{pro2.3} $\triangle(H)\subset\widetilde\triangle(M_H)$, $\max\triangle(H)\leq\max\widetilde\triangle(\mathcal A(M_H))$, and $\min\triangle(H)=\gcd\widetilde\triangle(\mathcal A(M_H))=\min\widetilde\triangle(M_H)$.
\end{enumerate}
\end{proposition}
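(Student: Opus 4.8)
The plan is to treat the three parts in turn, each time passing from factorizations of an element of $H$ to relations in $M_H$ and then refining a relation to atoms of $M_H$ via a factorization into atoms (which exists because $M_H$ is atomic: it is Krull by Lemma~\ref{lem1}.\ref{lem1.2}, or simply because $|x|+|y|$ strictly decreases under proper division in $M_H$). I will use repeatedly that a nontrivial factor $(x_i,y_i)$ of an element of $M_H$ has $x_i\neq 1\neq y_i$, hence $|x_i|,|y_i|\geq 1$, since $\pi_H(x_i)=\pi_H(y_i)$ and $\pi_H^{-1}(1)=\lbrace 1\rbrace$. The degenerate cases in which some set in the statement is empty (for instance $H$ a group, or $H$ factorial, or half-factorial, where only the conventions $\sup\emptyset=0$, $\tfrac00=1$ and the understanding that only positive differences are relevant are at play) are disposed of first.

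\emph{Part~\ref{pro2.1}.} By Corollary~\ref{cor1} it suffices to bound $\mu(H)$. The crucial point is that $z,z'\in\mathsf Z(a)$ with $z\not\approx z'$ forces $(z,z')\in\mathcal A(M_H)$: if $(z,z')=(x_1,y_1)(x_2,y_2)$ in $M_H$ with both factors nontrivial, then $w:=x_1y_2\in\mathsf Z(a)$, and $x_1\mid\gcd(z,w)$, $y_2\mid\gcd(w,z')$, so $z\approx w\approx z'$, a contradiction. Now fix $a$ with $|\mathcal R_a|\geq 2$ and $\rho\in\mathcal R_a$; pick $z\in\rho$ of minimal length and $z'$ of minimal length in some other class. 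Then $z\not\approx z'$, so $(z,z')\in\mathcal A(M_H)$ and $|\rho|=|z|\leq\sup\lbrace|x|\mid(x,y)\in\mathcal A(M_H)\rbrace$; taking suprema over $\rho$ and then over $a$ gives $\mathsf c(H)=\mu(H)\leq\sup\lbrace|x|\mid(x,y)\in\mathcal A(M_H)\rbrace$.

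\emph{Part~\ref{pro2.2}.} The equality $\rho(H)=\sup\lbrace|x|/|y|\mid(x,y)\in M_H\rbrace$ is immediate: for $a\in H$, pairs $(z,z')$ with $|z'|=\min\mathsf L(a)$ and $|z|$ large realize $\rho(a)$ (or arbitrarily large quotients when $\sup\mathsf L(a)=\infty$), and conversely $|x|/|y|\leq\rho(\pi_H(x))\leq\rho(H)$ for every $(x,y)\in M_H$. To pass to atoms, factor $(x,y)=\prod_{i=1}^n(x_i,y_i)$ into atoms and apply the mediant inequality $\frac{\sum_i|x_i|}{\sum_i|y_i|}\leq\max_i\frac{|x_i|}{|y_i|}$, valid because each $|y_i|\geq 1$.

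\emph{Part~\ref{pro2.3}.} The inclusion $\triangle(H)\subset\widetilde\triangle(M_H)$ is clear, a distance $d$ of $\mathsf L(a)$ being witnessed by $z,z'\in\mathsf Z(a)$ with $\bigl||z|-|z'|\bigr|=d$. For the maxima, given $d\in\triangle(\mathsf L(a))$ choose $z,z'\in\mathsf Z(a)$ with $|z|=l$, $|z'|=l+d$, $\mathsf L(a)\cap[l,l+d]=\lbrace l,l+d\rbrace$, factor $(z',z)=\prod_{i=1}^n(x_i,y_i)$ into atoms, and pass through the telescoping factorizations $w_j=x_1\cdots x_jy_{j+1}\cdots y_n\in\mathsf Z(a)$; since $|w_0|=l$, $|w_n|=l+d$, $|w_j|-|w_{j-1}|=|x_j|-|y_j|$, and no $|w_j|$ lies strictly between $l$ and $l+d$, the first $j$ with $|w_j|\geq l+d$ satisfies $|w_{j-1}|\leq l$, so $\widetilde\triangle(x_j,y_j)\geq|x_j|-|y_j|\geq d$ (with $x_j\neq y_j$ as $d\geq 1$), giving $\max\triangle(H)\leq\max\widetilde\triangle(\mathcal A(M_H))$. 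For the minima, if $(x,y)\in M_H$ with $|x|>|y|$ and $\mathsf L(\pi_H(x))\cap[|y|,|x|]=\lbrace m_0<\dots<m_k\rbrace$, then $m_0=|y|$, $m_k=|x|$, $k\geq 1$, each $m_j-m_{j-1}$ is a distance of $\mathsf L(\pi_H(x))$, so $\widetilde\triangle(x,y)=\sum_j(m_j-m_{j-1})\geq\min\triangle(H)$; combined with the inclusion this yields $\min\triangle(H)=\min\widetilde\triangle(M_H)$. Finally, factoring $(x,y)$ into atoms exhibits $\widetilde\triangle(x,y)$ as a $\mathbb Z$-linear combination of the numbers $\widetilde\triangle(u)$, $u\in\mathcal A(M_H)$, hence a multiple of $g:=\gcd\widetilde\triangle(\mathcal A(M_H))$; conversely, choosing finitely many atoms $(x^{(i)},y^{(i)})$ with $|x^{(i)}|>|y^{(i)}|$ whose length differences have gcd $g$, with Bézout coefficients $c_i\in\mathbb Z$, the pair
\[
\Bigl(\,\prod_{c_i>0}\bigl(x^{(i)}\bigr)^{c_i}\prod_{c_i<0}\bigl(y^{(i)}\bigr)^{-c_i}\,,\ \prod_{c_i>0}\bigl(y^{(i)}\bigr)^{c_i}\prod_{c_i<0}\bigl(x^{(i)}\bigr)^{-c_i}\Bigr)
\]
lies in $M_H$ (because $\pi_H(x^{(i)})=\pi_H(y^{(i)})$ for each $i$) and has $\widetilde\triangle$ equal to $\sum_ic_i\bigl(|x^{(i)}|-|y^{(i)}|\bigr)=g$, so $\min\widetilde\triangle(M_H)=g$. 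The step I expect to be most delicate is this last construction: realizing $g$ inside $M_H$ forces one to use atoms in both orientations, replacing $(x^{(i)},y^{(i)})$ by $(y^{(i)},x^{(i)})$ for the negative coefficients, and to check that the resulting pair is still a relation — which works precisely because $H$ is commutative. The remaining work is routine, chiefly the bookkeeping of the degenerate cases and of the supremum and gcd conventions.
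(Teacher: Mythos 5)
Your proof is correct, and parts \ref{pro2.2} and \ref{pro2.3} follow essentially the paper's own route: the same mediant inequality for the elasticity, and the same ``homomorphism $\overline\triangle$ plus symmetry of $M_H$ under swapping coordinates'' idea for the gcd statement (you make the symmetrization explicit via Bézout coefficients where the paper absorbs it by allowing only positive multiplicities of atoms taken in either orientation; your telescoping walk $w_j=x_1\cdots x_jy_{j+1}\cdots y_n$ for the maximum is a mild variant of the paper's single-swap contradiction $z=y_j\prod_{i\neq j}x_i$). Part \ref{pro2.1} is where you genuinely diverge. The paper works directly with the catenary degree: it factors $(z,z')$ into atoms of $M_H$, builds the chain $z_i=z_{i-1}x_i^{-1}y_i$, and invokes Lemma \ref{lemX} to see that each step has distance either $0$ or $\max\lbrace|x_i|,|y_i|\rbrace$. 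You instead reduce to $\mu(H)$ via Corollary \ref{cor1} and prove the sharper structural fact that $z\not\approx z'$ forces $(z,z')\in\mathcal A(M_H)$ (via the intermediate factorization $w=x_1y_2$); this is precisely the key step of the paper's later Proposition \ref{pro3}, so in effect you prove \ref{pro2.1} as a corollary of \ref{pro3} rather than the other way around. Your route buys a conceptually cleaner bound (the atoms of $M_H$ witnessing $\mu$ are exhibited directly) at the cost of relying on Corollary \ref{cor1}, i.e.\ on Lemma \ref{lem0} and Proposition \ref{pro1}, whereas the paper's chain construction for \ref{pro2.1} is self-contained and needs only Lemma \ref{lemX}. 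Both arguments are sound; your handling of the degenerate and convention-dependent cases is at the same (informal) level as the paper's.
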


\begin{proof}
\ 
\begin{enumerate}
 \item Let $a\in H\setminus H^\times$ and let $z,\,z'\in\mathsf Z(a)$ be two different factorizations of $a$. Then, of course, $(z,z')\in M_H$. Thus there are $(x_1,y_1),\ldots,(x_k,y_k)\in\mathcal A(M_H)$ such that $(z,z')=(x_1,y_1)\cdot\ldots\cdot(x_k,y_k)$. Now we can construct the following chain of factorizations: $z=z_0$ and $z_i=z_{i-1}x_i^{-1}y_i$ for $i\in[1,k]$. Then $z_k=z'$. Since $(x_i,y_i)\in\mathcal A(M_H)$, we find $\gcd(x_i,y_i)=1$ or $x_i=y_i=u$ with $u\in\mathcal A(H)\subset\mathsf Z(H)$ by Lemma \ref{lemX}. This implies that either $\mathsf d(z_{i-1},z_i)=\max\lbrace |x_i|,|y_i|\rbrace$ or $\mathsf d(z_{i-1},z_i)=0$. Thus $z$ and $z'$ can be concatenated by a $\max\lbrace |x_i|,|y_i|\mid i\in[1,k]\rbrace$-chain. Since $(x,y)\in\mathcal A(M_H)$ if and only if $(y,x)\in\mathcal A(M_H)$, the assertion follows.
 \item For all $a\in H$, we have that $\mathsf Z(a)\times\mathsf Z(a)=M_H$. Thus we find
\[
\rho(a)=\frac{\sup\mathsf L(a)}{\min\mathsf L(a)}=\sup\left\lbrace\left.\frac{|x|}{|y|}\right|x,y\in\mathsf Z(a)\right\rbrace=\sup\left\lbrace\left.\frac{|x|}{|y|}\right|(x,y)\in\mathsf Z(a)\times\mathsf Z(a)\cap M_H\right\rbrace.
\]
The first equality now follows.
Since $\mathcal A(M_H)\subset M_H$ is a subset, it is clear that
\[
\sup\left\lbrace\left.\frac{|x|}{|y|}\right|(x,y)\in\mathcal A(M_H)\right\rbrace\leq\sup\left\lbrace\left.\frac{|x|}{|y|}\right|(x,y)\in M_H\right\rbrace.
\]
In order to prove equality, we show the following assertion:
\begin{itemize}
 \item[] For all $(x,y)\in M_H$, there is $(x',y')\in\mathcal A(M_H)$ such that $\frac{|x'|}{|y'|}\geq\frac{|x|}{|y|}$.
\end{itemize}
Let $(x,y)\in M_H$ and without loss of generality assume $|x|\geq|y|$. Now there is some $n\in\mathbb N$ and $(x_i,y_i)\in\mathcal A(M_H)$ for all $i\in[1,n]$ such that $(x,y)=(x_1,y_1)\cdot\ldots\cdot(x_n,y_n)$. When we pass to the lengths, we find $|x|=\sum_{i=1}^n|x_i|$ and $|y|=\sum_{i=1}^n|y_i|$. This yields
\[
\frac{|x|}{|y|}\cdot|y|=|x|=\sum_{i=1}^n|x_i|=\sum_{i=1}^n\frac{|x_i|}{|y_i|}|y_i|\leq\max_{i=1}^n\frac{|x_i|}{|y_i|}\sum_{i=1}^n|y_i|=\max_{i=1}^n\frac{|x_i|}{|y_i|}\cdot|y|,
\]
Thus we find
\[
\frac{|x|}{|y|}\leq\max_{i=1}^n\frac{|x_i|}{|y_i|}.
\]
 \item Since, for all $d\in\triangle(H)$, there exist $x,\,y\in\mathsf Z(H)$ such that $|x|-|y|=d$ and $\pi_H(x)=\pi_H(y)$, the inclusion $\triangle(H)\subset\widetilde\triangle(M_H)$ is obvious.\\
Now let $d=\max\triangle(H)$. Then there exists $(x,y)\in M_H$ and $a\in H$ such that $\pi_H(x)=aH^\times$, $|x|-|y|=d$ and $[|y|,|x|]\cap\mathsf L(a)=\lbrace|y|,|x|\rbrace$. There are $k\in\mathbb N$ and $(x_1,y_1),\ldots,(x_k,y_k)\in\mathcal A(M_H)$ such that $(x,y)=(x_1,y_1)\cdot\ldots\cdot(x_k,y_k)$. Since $\sum_{i=1}^k|x_i|=|x|>|y|=\sum_{i=1}^k|y_i|$ there exists $j\in[1,k]$ such that $|x_j|>|y_j|$. Now we show $|x_j|-|y_j|\geq d$. We assume to the contrary $|x_j|-|y_j|<d$. We set $z=y_j\prod_{i=1,i\neq j}^k x_i$. Clearly, $z\in\mathsf Z(a)$ and $|z|=|x|-(|x_j|-|y_j|)\in[|x|-(d-1),|x|-1]\cap\mathsf L(a)$, a contradiction.\\
Let $d=\min\triangle(H)$ and $d'=\min\widetilde\triangle(M_H)$. Since $\triangle(H)\subset\widetilde\triangle(M_H)$, $d'\leq d$ is clear. Now we assume $d'<d$. Then there is $(x,y)\in M_H$ such that $\widetilde\triangle(x,y)=d'<d$, a contradiction.\\ It remains to show that $\min\widetilde\triangle(M_H)=\gcd(\widetilde\triangle(\mathcal A(M_H)))$. We define a map $\overline\triangle:M_H\rightarrow\mathbb Z$ given by $\overline\triangle(x,y)=|x|-|y|$. This is a homomorphism, and $\widetilde\triangle(x,y)=\overline\triangle(x,y)$ for all $(x,y)\in M_H$ such that $|x|\geq|y|$. Since, for all $(x,y)\in M_H$, we have $(y,x)\in M_H$, we find $\overline\triangle(X)=\widetilde\triangle(X)\cup(-\widetilde\triangle(X))\cup\lbrace 0\rbrace$ for all subsets $X\subset M_H$, and thus $\gcd(\overline\triangle(\mathcal A(M_H)))=\gcd(\widetilde\triangle(\mathcal A(M_H)))$. Let now $d'=\gcd(\overline\triangle(\mathcal A(M_H)))\in\mathbb N$ and $d=\min\widetilde\triangle(M_H)$. Then there are $k\in\mathbb N,\,n_1,\ldots,n_k\in\mathbb N$, and $(x_1,y_1),\ldots,(x_k,y_k)\in\mathcal A(M_H)$ such that
\[
d'=\sum_{i=1}^k n_i\overline\triangle(x_i,y_i)=\overline\triangle\left(\prod_{i=1}^k(x_i,y_i)^{n_i}\right),
\]
and since 
\[
0<d'=\left|\sum_{i=1}^k x_i^{n_i}\right|-\left|\sum_{i=1}^ky_i^{n_i}\right|\mbox{, we find } d'=\widetilde\triangle\left(\prod_{i=1}^k(x_i,y_i)^{n_i}\right).
\]
Thus $d'\in\widetilde\triangle(M_H)$ Therefore $d'\geq d$. Since $d'\mid d$, equality follows.
\qedhere
\end{enumerate}
\end{proof}

Next, we mimic the ideas from \cite[page 259 and Theorem 3.2]{MR2243561}.

\begin{definition}
Let $H$ be an atomic monoid.
For $a\in H$, we define
\[
\mathcal A_a(M_H)=\lbrace (x,y)\in\mathcal A(M_H)\mid\pi_H(x)=aH^\times\rbrace
\]
and then set
\[
\nu(H)=\sup\lbrace\mu(a)\mid a\in H,\,\mathcal A_a(M_H)\neq\emptyset,\,|\mathcal R_a|\geq 2\rbrace.
\]
\end{definition}

\begin{proposition}
\label{pro3}
Let $H$ be an atomic monoid. Then
\[
 \mathsf c(H)=\nu(H).
\]
\end{proposition}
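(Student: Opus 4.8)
The plan is to reduce this to Corollary \ref{cor1}, which already gives $\mathsf c(H)=\mu(H)$, by showing that the extra hypothesis ``$\mathcal A_a(M_H)\neq\emptyset$'' occurring in the definition of $\nu(H)$ is redundant. Indeed, $\mu(H)$ and $\nu(H)$ are suprema of the \emph{same} quantities $\mu(a)$, taken over the sets $\lbrace a\in H\mid |\mathcal R_a|\geq 2\rbrace$ and $\lbrace a\in H\mid \mathcal A_a(M_H)\neq\emptyset,\ |\mathcal R_a|\geq 2\rbrace$, respectively. The second set is contained in the first, so it suffices to prove the reverse inclusion, that is, that $|\mathcal R_a|\geq 2$ already forces $\mathcal A_a(M_H)\neq\emptyset$. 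Once this is established the two index sets coincide, hence $\nu(H)=\mu(H)=\mathsf c(H)$, with no inequality to check separately.

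So fix $a\in H$ with $|\mathcal R_a|\geq 2$ and pick $z,\,z'\in\mathsf Z(a)$ lying in two distinct $\mathcal R$-equivalence classes. First note $\gcd(z,z')=1$: otherwise, since $\pi_H(z)=\pi_H(z')$, the one-term sequence $(z,z')$ would already be an $\mathcal R$-chain, contradicting $z\not\approx z'$. I claim $(z,z')\in\mathcal A(M_H)$; granting this, $(z,z')\in\mathcal A_a(M_H)$ and we are done. As $z\neq z'$, the pair $(z,z')$ is not the identity of $M_H$, so assume towards a contradiction that $(z,z')=(x,y)(x',y')$ with $(x,y),\,(x',y')\in M_H$ both non-trivial. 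From $(x,y)\in M_H$ we get $\pi_H(x)=\pi_H(y)$, and since $\pi_H(x)=1$ forces $x=1$ (no non-trivial product of atoms of $H_{\mathrm{red}}$ equals $1$), we have $x=1$ iff $y=1$; thus $x,y,x',y'$ are all $\neq 1$. Moreover $z=xx'$ and $z'=yy'$, so $\gcd(z,z')=1$ yields $\gcd(x,y)=\gcd(x',y')=1$. Now the three factorizations $z=xx'$, $w:=xy'$, $z'=yy'$ all lie in $\mathsf Z(a)$ (using $\pi_H(x)=\pi_H(y)$ and $\pi_H(x')=\pi_H(y')$), and $\gcd(z,w)=x\neq 1$, $\gcd(w,z')=y'\neq 1$, so $(z,w,z')$ is an $\mathcal R$-chain; hence $z\approx z'$, a contradiction. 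Therefore $(z,z')\in\mathcal A(M_H)$.

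The only step carrying real content is this last one, namely the observation that any non-trivial factorization of $(z,z')$ inside $M_H$ can be unwound into a short $\mathcal R$-chain from $z$ to $z'$, which is impossible when $z$ and $z'$ were chosen in different $\mathcal R$-classes; everything else is bookkeeping with the definitions, and the assertion $\mathsf c(H)=\nu(H)$ then follows by combining the resulting equality of index sets with Corollary \ref{cor1}.
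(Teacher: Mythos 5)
Your proof is correct and follows essentially the same route as the paper: reduce to Corollary \ref{cor1} by showing $\mu(H)=\nu(H)$, which amounts to proving that $|\mathcal R_a|\geq 2$ forces some pair $(z,z')$ with $z\not\approx z'$ to lie in $\mathcal A_a(M_H)$, the key step being that any nontrivial factorization of $(z,z')$ in $M_H$ unwinds into an $\mathcal R$-chain from $z$ to $z'$. The only (harmless, and in fact slightly cleaner) difference is that you test atomicity of $(z,z')$ against a decomposition into two non-identity factors, whereas the paper writes $(z,z')$ as a product of $k\geq 2$ atoms of $M_H$ and builds a length-$k$ chain, which implicitly uses that $M_H$ is atomic (Lemma \ref{lem1}).
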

\begin{proof}
By Corollary \ref{cor1}, it is sufficient to show that $\mu(H)=\nu(H)$. When we compare the definitions of those two invariants, we see that the only thing we really have to show is that
\[
\lbrace a\in H\mid\mathcal A_a(M_H)\neq\emptyset,\,|\mathcal R_a|\geq 2\rbrace=\lbrace a\in H\mid|\mathcal R_a|\geq 2\rbrace. 
\]
One inclusion is trivial and, for the other one, let $a\in H$ be such that $|\mathcal R_a|\geq 2$, and let $z,\,z'\in\mathsf Z(a)$ be two factorizations of $a$ such that $z\not\approx z'$ and such that both are minimal in their $\mathcal R$-equivalence classes with respect to their lengths. Now assume $(z,z')\notin\mathcal A(M_H)$. Then there are $k\geq 2$ and $(x_1,y_1),\ldots,(x_k,y_k)\in\mathcal A(M_H)$ such that $(z,z')=(x_1,y_1)\cdot\ldots\cdot(x_k,y_k)$. But now we find the following $\mathcal R$-chain from $z$ to $z'$: $z_0=z$ and $z_i=z_{i-1}x_i^{-1}y_i$ for $i\in[1,k]$. Then $z_k=z'$ and $\gcd(z_{i-1},z_i)\neq 1$. Since this is a contradiction we have $(z,z')\in\mathcal A(M_H)$, and thus $(z,z')\in\mathcal A_a(M_H)\neq\emptyset$.
\end{proof}

\begin{theorem}
Let $H$ be an atomic monoid. Then
\[
\mathsf c(H)=\sup\lbrace\mathsf c(a)\mid a\in H,\mathcal A_a(M_H)\neq\emptyset\rbrace.
\]
\end{theorem}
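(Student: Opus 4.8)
The plan is to deduce the statement from Propositions \ref{pro1} and \ref{pro3} by a short comparison of suprema; no genuinely new argument is needed. One inequality is immediate: since $\lbrace a\in H\mid\mathcal A_a(M_H)\neq\emptyset\rbrace\subset H$, we have
\[
\sup\lbrace\mathsf c(a)\mid a\in H,\,\mathcal A_a(M_H)\neq\emptyset\rbrace\leq\sup\lbrace\mathsf c(a)\mid a\in H\rbrace=\mathsf c(H).
\]
For the reverse inequality I would start from Proposition \ref{pro3}, which gives $\mathsf c(H)=\nu(H)$, and then unfold the definition
\[
\nu(H)=\sup\lbrace\mu(a)\mid a\in H,\,\mathcal A_a(M_H)\neq\emptyset,\,|\mathcal R_a|\geq 2\rbrace.
\]

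Next I would fix an arbitrary $a\in H$ occurring in this last supremum, i.e., with $\mathcal A_a(M_H)\neq\emptyset$ and $|\mathcal R_a|\geq 2$. The condition $|\mathcal R_a|\geq 2$ is exactly the hypothesis of Proposition \ref{pro1}, which therefore yields $\mu(a)\leq\mathsf c(a)$; and since moreover $\mathcal A_a(M_H)\neq\emptyset$, this element $a$ belongs to the index set on the right-hand side of the theorem, so $\mu(a)\leq\mathsf c(a)\leq\sup\lbrace\mathsf c(a')\mid a'\in H,\,\mathcal A_{a'}(M_H)\neq\emptyset\rbrace$. Taking the supremum over all admissible $a$ gives
\[
\mathsf c(H)=\nu(H)\leq\sup\lbrace\mathsf c(a)\mid a\in H,\,\mathcal A_a(M_H)\neq\emptyset\rbrace,
\]
and combined with the easy inequality this proves equality. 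Degenerate cases ($H$ a group, $\mathsf c(H)=\infty$, or empty index sets) need no separate treatment, since everything is expressed through suprema together with the convention $\sup\emptyset=0$.

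Since the proof is really just bookkeeping, I do not expect a serious obstacle; the only delicate point is keeping the three index sets aligned — the set $\lbrace a\mid|\mathcal R_a|\geq 2\rbrace$ implicit in the definition of $\nu$, the set $\lbrace a\mid\mathcal A_a(M_H)\neq\emptyset,\,|\mathcal R_a|\geq 2\rbrace$, and the set $\lbrace a\mid\mathcal A_a(M_H)\neq\emptyset\rbrace$ appearing in the statement — and noting that Proposition \ref{pro1} may be applied only when $|\mathcal R_a|\geq 2$, yet this restriction costs nothing here because dropping it merely enlarges the index set of the final supremum and hence cannot decrease its value. Once this is observed the chain of inequalities closes up, and the theorem follows.
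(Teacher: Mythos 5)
Your proposal is correct and follows essentially the same route as the paper: the trivial inequality from shrinking the index set, then $\mathsf c(H)=\nu(H)$ via Proposition \ref{pro3} and a pointwise application of Proposition \ref{pro1} to the elements indexing $\nu(H)$. You are in fact slightly more careful than the paper in noting that Proposition \ref{pro1} is only invoked for $a$ with $|\mathcal R_a|\geq 2$, which is exactly where it is needed.
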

\begin{proof}
Obviously, we have $\mathsf c(H)\geq\sup\lbrace\mathsf c(a)\mid a\in H,\mathcal A_a(M_H)\neq\emptyset\rbrace$. Since, by Proposition \ref{pro1}, $\mathsf c(a)\geq\mu(a)$ for all $a\in H$, we find by Proposition \ref{pro3}, that 
\begin{align*}
\sup\lbrace\mathsf c(a)\mid a\in H,\mathcal A_a(M_H)\neq\emptyset\rbrace
& \geq\sup\lbrace\mu(a)\mid a\in H,\mathcal A_a(M_H)\neq\emptyset\rbrace \\
& \geq\sup\lbrace\mu(a)\mid a\in H,\mathcal A_a(M_H)\neq\emptyset,|\mathcal R_a|\geq 2\rbrace \\
& =\nu(H)=\mathsf c(H).
\qedhere
\end{align*}
\end{proof}

\begin{definition}
Let $H$ be an atomic monoid.
For a non-empty subset $\emptyset\neq Y\subset\mathsf Z(H)$ and a factorization $x\in\mathsf Z(H)$, we set
\[
\mathsf d(x,Y)=\min\lbrace\mathsf d(x,y)\mid y\in Y\rbrace
\]
for the \emph{distance} between $x$ and $Y$.
\end{definition}

\begin{theorem}
\label{theo1}
Let $H$ be an atomic monoid and $u\in\mathcal A(H)$.
\begin{enumerate}
 \item \label{theo1.1} $\mathsf t(H,uH^\times)=\sup\lbrace\mathsf d(x,\mathsf Z(a)\cap uH^\times\mathsf Z(H))\mid a\in uH,\,x\in\mathsf Z(a),\,\mathcal A_a(M_H)\neq\emptyset\rbrace$.
 \item \label{theo1.2} $\mathsf t(H) = \sup\lbrace\mathsf d(x,\mathsf Z(a)\cap uH^\times\mathsf Z(H))\mid a\in uH,\,x\in\mathsf Z(a),\,\mathcal A_a(M_H)\neq\emptyset,\,u\in\mathcal A(H)\rbrace$.
\end{enumerate}
\end{theorem}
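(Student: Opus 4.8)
The plan is to prove the first assertion by separating the two inequalities and to deduce the second one formally.

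For the bookkeeping, I would first record the reformulation coming straight from Definition \ref{def-tame}: for $u\in\mathcal A(H)$ and $a\in H$ one has $\mathsf t(a,uH^\times)=0$ unless $\mathsf Z(a)\cap uH^\times\mathsf Z(H)\neq\emptyset$ (which is equivalent to $a\in uH$), in which case
\[
\mathsf t(a,uH^\times)=\sup\lbrace\mathsf d(z,\mathsf Z(a)\cap uH^\times\mathsf Z(H))\mid z\in\mathsf Z(a)\rbrace .
\]
Taking the supremum over $a$, the right-hand side of \ref{theo1.1} is the supremum of the same family of distances over a sub-collection of the index pairs, namely those with $\mathcal A_a(M_H)\neq\emptyset$; hence it is automatically $\le\mathsf t(H,uH^\times)$. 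So everything reduces to the opposite inequality $\mathsf t(H,uH^\times)\le C$, where $C$ is the right-hand side of \ref{theo1.1}, and by the reformulation this amounts to: for every $a\in uH$ and every $z\in\mathsf Z(a)$ there is $z'\in\mathsf Z(a)\cap uH^\times\mathsf Z(H)$ with $\mathsf d(z,z')\le C$.

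To construct such a $z'$, fix $a\in uH$ and $z\in\mathsf Z(a)$, choose some $w\in\mathsf Z(a)$ with $uH^\times\mid w$, and look at $(z,w)\in M_H$. By Lemma \ref{lem1} the monoid $M_H$ is Krull, in particular atomic, and since $(z,w)\neq(1,1)$ we may write $(z,w)=(x_1,y_1)\cdots(x_k,y_k)$ with $(x_i,y_i)\in\mathcal A(M_H)$; as $uH^\times\mid w=y_1\cdots y_k$ there is an index $j$ with $uH^\times\mid y_j$. Put $a'=\pi_H(x_j)=\pi_H(y_j)$. Then $(x_j,y_j)\in\mathcal A_{a'}(M_H)$, so $\mathcal A_{a'}(M_H)\neq\emptyset$, and $y_j\in\mathsf Z(a')\cap uH^\times\mathsf Z(H)$, so $a'\in uH$; therefore $\delta:=\mathsf d(x_j,\mathsf Z(a')\cap uH^\times\mathsf Z(H))$ is one of the distances over which $C$ is taken, whence $\delta\le C$. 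Now choose $w'\in\mathsf Z(a')\cap uH^\times\mathsf Z(H)$ with $\mathsf d(x_j,w')=\delta$ and set $z'=zx_j^{-1}w'$. Since $x_j\mid z$ we have $z'\in\mathsf Z(H)$; since $\pi_H(w')=\pi_H(x_j)$ we have $\pi_H(z')=\pi_H(z)=aH^\times$, i.e.\ $z'\in\mathsf Z(a)$; and $uH^\times\mid w'\mid z'$, so $z'\in uH^\times\mathsf Z(H)$. Cancelling the common subfactorization $zx_j^{-1}$ from $z=(zx_j^{-1})x_j$ and $z'=(zx_j^{-1})w'$ gives $\mathsf d(z,z')=\mathsf d(x_j,w')=\delta\le C$, which is exactly what was needed. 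The second assertion then follows at once, since $\mathsf t(H)=\mathsf t(H,\mathcal A(H_{\mathrm{red}}))=\sup\lbrace\mathsf t(H,uH^\times)\mid u\in\mathcal A(H)\rbrace$ because every atom of $H_{\mathrm{red}}$ is of the form $uH^\times$ with $u\in\mathcal A(H)$; substituting \ref{theo1.1} and merging the suprema yields the displayed formula.

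The one genuinely delicate point is the choice of $z'$. Replacing $x_j$ by $y_j$ inside $z$ already lands in $\mathsf Z(a)\cap uH^\times\mathsf Z(H)$, but only with the estimate $\mathsf d(z,z')\le\max\lbrace|x_j|,|y_j|\rbrace$, which need not be $\le C$. The remedy is to replace $x_j$ not by $y_j$ but by a factorization $w'$ of $a'$ that is \emph{closest} to $x_j$ among those divisible by $uH^\times$; by translation invariance of $\mathsf d$ this makes $\mathsf d(z,z')$ equal exactly to $\delta=\mathsf d(x_j,\mathsf Z(a')\cap uH^\times\mathsf Z(H))$, which is manifestly one of the quantities over which $C$ is a supremum. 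The remaining ingredients — the reformulation of $\mathsf t(a,uH^\times)$, the atomicity of $M_H$ from Lemma \ref{lem1}, and the routine bookkeeping with $\pi_H$ and with divisibility by $uH^\times$ — are straightforward.
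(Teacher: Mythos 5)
Your proposal is correct and follows essentially the same route as the paper: both directions are handled the same way, with the key step being to factor $(z,w)\in M_H$ into atoms of $M_H$, locate the atom $(x_j,y_j)$ with $uH^\times\mid y_j$, and replace $x_j$ inside $z$ by an element of $\mathsf Z(\pi_H(x_j))\cap uH^\times\mathsf Z(H)$ at minimal distance from $x_j$. Your treatment of the easy inequality (viewing the right-hand side as a supremum over a subfamily) is in fact slightly cleaner than the paper's, which tacitly assumes the supremum is attained.
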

\begin{proof}
Without loss of generality, we assume that $H$ is reduced: i.e., $H_{\mathrm{red}}=H$.
\begin{enumerate}
 \item Let $t=\mathsf t(H,u)$ and $d=\sup\lbrace\mathsf d(x,\mathsf Z(a)\cap u\mathsf Z(H))\mid a\in uH,\,x\in\mathsf Z(a),\,\mathcal A_a(M_H)\neq\emptyset\rbrace$. We first prove that $t\leq d$. Assume $a\in uH$. Now we must show that, for all $z\in\mathsf Z(a)$, there exists $z'\in\mathsf Z(a)\cap u\mathsf Z(H)$ such that $\mathsf d(z,z')\leq d$. Let $z\in\mathsf Z(a)$. If $u\mid z$, then we are done by setting $z'=z$, since then $\mathsf d(z,z')=0\leq d$. Now assume that $u\nmid z$. As $a\in uH$, we have $u^{-1}a\in H$, and therefore there is some $\overline z\in\mathsf Z(u^{-1}a)$. Then $u\overline z\in\mathsf Z(a)$ and $u\mid u\overline z$. Since $(z,u\overline z)\in M_H$, there exist $n\in\mathbb N$ and $(x_1,y_1),\ldots,(x_n,y_n)\in\mathcal A(M_H)$ such that $(z,u\overline z)=(x_1,y_1)\cdot\ldots\cdot(x_n,y_n)$. This implies that $(x_i,y_i)\mid (z,u\overline z)$ in $M_H$ for all $i\in[1,n]$ and that there exists some $j\in[1,n]$ such that $u\mid y_j$. Observe that $x_j\mid z$ implies that $u\nmid x_j$. Then $(x_j,y_j)\in\mathcal A_{\pi_H(x_j)}(M_H)$, $\pi_H(x_j)=\pi_H(y_j)\in uH$, and $y_j\in\mathsf Z(\pi_H(x_j))\cap u\mathsf Z(H)$. Now take $y'\in\mathsf Z(\pi_H(x_j))\cap u\mathsf Z(H)$ such that $\mathsf d(x_j,y')=\mathsf d(x_j,\mathsf Z(\pi_H(x_j))\cap u\mathsf Z(H))$. If we now choose $z'=y'zx_j^{-1}$, then $u\mid z'$, $z'\in\mathsf Z(a)$, and $\mathsf d(z,z')=\mathsf d(x_j(zx_j^{-1}),y'zx_j^{-1})=\mathsf d(x_j,y')\leq q$. This proves $t\leq d$.\\
 To prove $t\geq d$, let $z\in\mathsf Z(H)$ with $u\mid\pi_H(z)$ be such that $d=\mathsf d(z,\mathsf Z(\pi_H(z))\cap u\mathsf Z(H))$, and let $y\in\mathsf Z(\pi_H(z))\cap u\mathsf Z(H)$ be such that $d=\mathsf d(z,y)$. Then as $t$ is the tame degree of $H$, there must be an element $x\in\mathsf Z(\pi_H(z))$ with $u\mid x$ and $\mathsf d(z,x)\leq t$ by definition. Now $d=\mathsf d(z,y)=\mathsf d(z,\mathsf Z(\pi_H(z))\cap u\mathsf Z(H))\leq\mathsf d(z,x)\leq t$ follows.
\item Obvious by part \ref{theo1.1} and the very definition of the tame degree.
\qedhere
\end{enumerate}
\end{proof}

Let $H$ be an atomic monoid. Suppose we have a decomposition $\mathcal A(H_{\mathrm{red}})=\biguplus_{i\in I}A_i$, where $I$ is an index set and $A_i\subset\mathcal A(H_{\mathrm{red}})$ for $i\in I$ are non-empty subsets such that
\begin{equation}
\label{one}
\mathcal A(M_H)\cap(\mathcal F(A_i)\times\mathcal F(A_i))=\lbrace (a,a)\mid a\in A_i\rbrace
\mbox{ for all }i\in I.
\end{equation}
Let $a,\,b\in\mathcal A(H_{\mathrm{red}})$ and define an equivalence relation $\simeq$ on $\mathcal A(H_{\mathrm{red}})$ by $a\simeq b$ if $a,\,b\in A_i$ for some $i\in I$. We can extend the canonical projection $\pi_\simeq:\mathcal A(H_{\mathrm{red}})\rightarrow\mathcal A(H_{\mathrm{red}})/\simeq$ to a monoid epimorphism $\overline\pi_\simeq:H_{\mathrm{red}}\rightarrow\overline H:=\left[ [a_i]_\simeq\mid i\in I\right]$ (well defined by (\ref{one})) onto a reduced, atomic monoid, where $a_i\in A_i$ for all $i\in I$. Of course, the possibly most interessting special case is, when $I$ is finite, that is, $\overline H$ is a finitely generated, reduced, atomic monoid.\\
Now we can prove the following result.
\begin{theorem}
 \label{theo2}
Let $H$ and $\overline H$ be as above. Then
\[
\mathsf c(\overline H)\leq\mathsf c(H),
\]
and, if additionally $\pi_\simeq$ induces a homomorphism from $M_H$ onto $M_{\overline H}$, then
\begin{enumerate}
 \item $\mathsf c(H)\leq\max\lbrace |x|\mid (x,y)\in\mathcal A(M_{\overline H})\rbrace$;\\
in particular, if $\mathsf c(\overline H)=\max\lbrace |x|\mid (x,y)\in\mathcal A(M_{\overline H})\rbrace$, then $\mathsf c(H)=\mathsf c(\overline H)$;
 \item $\rho(H)=\rho(\overline H)=\max\left\lbrace\left.\frac{|x|}{|y|}\right|(x,y)\in\mathcal A(M_{\overline H})\right\rbrace$; and
 \item $\mathsf t(\overline H)\leq\mathsf t(H)$.
\end{enumerate}
\end{theorem}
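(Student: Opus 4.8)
The plan is to push the whole situation through the length‑preserving monoid homomorphism $\widetilde\pi\colon\mathsf Z(H)\to\mathsf Z(\overline H)$ extending $\pi_\simeq$, i.e.\ the one relabelling each atom by its $\simeq$-class. First I would record the routine facts: $|\widetilde\pi(z)|=|z|$; $\pi_{\overline H}\circ\widetilde\pi=\overline\pi_\simeq\circ\pi_H$, so $\widetilde\pi\times\widetilde\pi$ maps $M_H$ into $M_{\overline H}$ and $\widetilde\pi$ maps $\mathsf Z(a)$ into $\mathsf Z(\overline\pi_\simeq(a))$ for $a\in H$; $\widetilde\pi$ is surjective; and $\widetilde\pi(\gcd(x,y))$ divides $\gcd(\widetilde\pi(x),\widetilde\pi(y))$ in the free monoid $\mathsf Z(\overline H)$, whence $\mathsf d(\widetilde\pi(x),\widetilde\pi(y))\le\mathsf d(x,y)$. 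The one genuinely structural input, which I would isolate as a lemma, is that because $\overline H$ is $H_{\mathrm{red}}$ with $\simeq$-equivalent atoms identified, the kernel congruence $M_{\overline H}$ of $\pi_{\overline H}$ is generated by $\widetilde\pi(M_H)$; equivalently, any identity $\pi_{\overline H}(\overline z)=\pi_{\overline H}(\overline z')$ can be realised by a finite chain $\overline z=\overline u_0,\dots,\overline u_n=\overline z'$ in which each $\overline u_{j+1}$ arises from $\overline u_j=\overline c_j\,\widetilde\pi(x_j)$ by replacing $\widetilde\pi(x_j)$ with $\widetilde\pi(y_j)$ for some $(x_j,y_j)\in M_H$ and some $\overline c_j\in\mathsf Z(\overline H)$. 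This is the step I expect to be the main obstacle (controlling how an identity of $\overline H$-factorizations decomposes into images of $M_H$-relations); everything else is bookkeeping with $\widetilde\pi$ together with the Propositions already proved.

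For $\mathsf c(\overline H)\le\mathsf c(H)$, fix $\overline a\in\overline H$ and $\overline z,\overline z'\in\mathsf Z(\overline a)$ and take the chain $\overline u_0,\dots,\overline u_n$ from the lemma; since each step preserves the $\pi_{\overline H}$-value, all $\overline u_j$ lie in $\mathsf Z(\overline a)$. Lifting $\overline c_j$ to some $c_j\in\mathsf Z(H)$ gives $(c_jx_j,c_jy_j)\in M_H$, hence $c_jx_j,c_jy_j\in\mathsf Z(e_j)$ with $e_j:=\pi_H(c_jx_j)$ and $\overline\pi_\simeq(e_j)=\overline a$. A $\mathsf c(e_j)$-chain from $c_jx_j$ to $c_jy_j$ is in particular a $\mathsf c(H)$-chain, and its $\widetilde\pi$-image is a chain from $\widetilde\pi(c_jx_j)=\overline u_j$ to $\widetilde\pi(c_jy_j)=\overline u_{j+1}$ inside $\mathsf Z(\overline a)$ whose steps have distance $\le\mathsf c(H)$ by the estimate above. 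Concatenating over $j$ gives a $\mathsf c(H)$-chain from $\overline z$ to $\overline z'$, so $\mathsf c(\overline a)\le\mathsf c(H)$, and the supremum over $a$ gives the claim.

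Now assume the extra hypothesis. I read ``$\pi_\simeq$ induces a homomorphism from $M_H$ onto $M_{\overline H}$'' as: $(x,y)\mapsto(\widetilde\pi(x),\widetilde\pi(y))$ sends $\mathcal A(M_H)$ into $\mathcal A(M_{\overline H})$ (so this atom map extends to the homomorphism $\psi:=\widetilde\pi\times\widetilde\pi\colon M_H\to M_{\overline H}$), and $\psi$ is surjective; together these force $\psi(\mathcal A(M_H))=\mathcal A(M_{\overline H})$, since an atom of $M_{\overline H}$ is a product of non-trivial $\psi$-images of atoms of $M_H$ and hence is itself such an image. As $\psi$ preserves both coordinate lengths, the sets $\{(|x|,|y|):(x,y)\in\mathcal A(M_H)\}$ and $\{(|\overline x|,|\overline y|):(\overline x,\overline y)\in\mathcal A(M_{\overline H})\}$ coincide. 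Statement (1) then follows at once from Proposition \ref{pro2}.\ref{pro2.1} applied to $H$, and its ``in particular'' clause from $\mathsf c(\overline H)\le\mathsf c(H)$. Statement (2) is the same bookkeeping applied to Proposition \ref{pro2}.\ref{pro2.2}: $\rho(H)$ and $\rho(\overline H)$ both equal the supremum of $|x|/|y|$ over the respective atom sets, and these sets of ratios are equal, giving $\rho(H)=\rho(\overline H)=\max\{|x|/|y|:(x,y)\in\mathcal A(M_{\overline H})\}$.

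For $\mathsf t(\overline H)\le\mathsf t(H)$, fix $\overline u=[a_i]_\simeq\in\mathcal A(\overline H)$ with a representative $u=a_i\in A_i$, an element $\overline a\in\overline H$ with $\mathsf Z(\overline a)\cap\overline u\,\mathsf Z(\overline H)\ne\emptyset$ (the other case being vacuous), and an arbitrary $\overline z\in\mathsf Z(\overline a)$. Picking $\overline y$ in that intersection, surjectivity of $\psi$ lifts $(\overline z,\overline y)$ to $(z,y)\in M_H$ with $\widetilde\pi(z)=\overline z$ and $\widetilde\pi(y)=\overline y$; since $\overline u\mid\overline y$, some atom $v\in A_i$ divides $y$, and $z,y\in\mathsf Z(b')$ where $b':=\pi_H(z)$ satisfies $\overline\pi_\simeq(b')=\overline a$, so $\mathsf Z(b')\cap v\,\mathsf Z(H)\ne\emptyset$. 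By the definition of the tame degree there is $z'\in\mathsf Z(b')\cap v\,\mathsf Z(H)$ with $\mathsf d(z,z')\le\mathsf t(H)$; then $\widetilde\pi(z')\in\mathsf Z(\overline a)\cap\overline u\,\mathsf Z(\overline H)$ and $\mathsf d(\overline z,\widetilde\pi(z'))\le\mathsf d(z,z')\le\mathsf t(H)$. Hence $\mathsf t(\overline a,\overline u)\le\mathsf t(H)$ for all such $\overline a$ and $\overline u$, i.e.\ $\mathsf t(\overline H)\le\mathsf t(H)$.
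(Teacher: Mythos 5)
Your proof is correct in substance, but for the unconditional inequality $\mathsf c(\overline H)\leq\mathsf c(H)$ you take a genuinely different route from the paper. You lift $\mathsf Z(\overline H)$-chains through the presentation of $M_{\overline H}$ as the congruence generated by $\widetilde\pi(M_H)$ --- the lemma you rightly flag as the crux --- and then push $\mathsf c(H)$-chains back down using $\mathsf d(\widetilde\pi(x),\widetilde\pi(y))\leq\mathsf d(x,y)$. The paper avoids that lemma entirely: it only observes that $\gcd(z,z')\neq 1$ implies $\gcd(\pi_\simeq(z),\pi_\simeq(z'))\neq 1$, so $\pi_\simeq$ preserves the $\mathcal R$-relation (its statement \textbf{A1}); hence $\mu(\overline H)\leq\mu(H)$, and $\mathsf c=\mu$ from Corollary \ref{cor1} finishes it. The paper's route is cheaper, since it needs only this one-line fact about $\mathcal R$-classes rather than a generation statement for the congruence $M_{\overline H}$; your route is more explicit but leaves the congruence-generation lemma to be verified from the (rather implicit) definition of $\overline H$ as the quotient of $H_{\mathrm{red}}$ identifying $\simeq$-equivalent atoms --- under that reading it does hold, and you would need to spell this out. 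For parts (1)--(3) your bookkeeping matches the paper's: both reduce (1) and (2) to Proposition \ref{pro2} via length preservation of $\widetilde\pi$, and your lifting argument for (3) carefully fills in what the paper dismisses as obvious. Two small remarks: your reading of the hypothesis as ``atoms map onto atoms'' is formally a strengthening of the literal statement, but the paper's own step $\max\lbrace |x|\mid (x,y)\in\mathcal A(M_H)\rbrace=\max\lbrace |x|\mid (x,y)\in\mathcal A(M_{\overline H})\rbrace$ requires exactly the same reading, so you are not assuming more than the author does; and in (2) the passage from $\sup$ to $\max$ is justified in the paper by finite generation of $M_{\overline H}$ (the case $|I|<\infty$), which you assert without comment and should state explicitly.
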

\begin{proof}
Since $\pi_\simeq$ is defined as a map from $\mathcal A(H_{\mathrm{red}})$ onto $\mathcal A(\overline H)$, it trivially extends to $\pi_\simeq:\mathsf Z(H)\rightarrow\mathsf Z(\overline H)$ such that the following diagram commutes:
\[
\xymatrix{
\mathsf Z(H)\ar[d]^{\pi_H}\ar[r]^{\pi_\simeq} & \mathsf Z(\overline H)\ar[d]^{\pi_{\overline H}} \\
H_{\mathrm{red}}\ar[r]^{\overline \pi_\simeq} & \overline H &
}
\]
Now we prove the following two statements.
\begin{enumerate}
 \item[\textbf{A1}] For all $z,\,z'\in\mathsf Z(H)$, $z\approx z'$ implies $\pi_\simeq(z)\approx\pi_\simeq(z')$.
 \item[\textbf{A2}] For all $z\in\mathsf Z(H)$, $|z|=|\pi_\simeq(z)|$.
\end{enumerate}
\begin{proof}[Proof of A1]
Let $z,\,z'\in\mathsf Z(H)$ be such that $\gcd(z,z')\neq 1$. Then $1\neq\pi_\simeq(\gcd(z,z'))\mid\gcd(\pi_\simeq(z),\pi_\simeq(z'))$, and therefore $\gcd(\pi_\simeq(z),\pi_\simeq(z'))\neq 1$. The assertion is now obvious.
\end{proof}
\begin{proof}[Proof of A2]
It is obvious that $|z|=|\pi_\simeq(z)|$ for all $z\in\mathsf Z(H)$.
\end{proof}
By \textbf{A1}, we find $\mu(H)\geq\mu(\overline H)$, and thus, by Corollary \ref{cor1}, we have $\mathsf c(\overline H)=\mu(\overline H)\leq\mu(H)=\mathsf c(H)$. Now we assume that $\pi_\simeq$ induces a homomorphism from $M_H$ onto $M_{\overline H}$.
\begin{enumerate}
 \item By \textbf{A2}, we find $\max\lbrace |x|\mid (x,y)\in\mathcal A(M_H)\rbrace=\max\lbrace |x|\mid (x,y)\in\mathcal A(M_{\overline H})\rbrace$ whence Proposition \ref{pro2} implies that $\mathsf c(H)\leq\max\lbrace |x|\mid (x,y)\in\mathcal A(M_H)\rbrace=\max\lbrace |x|\mid (x,y)\in\mathcal A(M_{\overline H})\rbrace$.
 \item Since $\overline H$ is finitely generated, $M_{\overline H}$ is also finitely generated. Thus we have, by \textbf{A2},
\[
\sup\left\lbrace\left.\frac{|x|}{|y|}\right|(x,y)\in\mathcal A(M_H)\right\rbrace=\sup\left\lbrace\left.\frac{|x|}{|y|}\right|(x,y)\in\mathcal A(M_{\overline H})\right\rbrace=\max\left\lbrace\left.\frac{|x|}{|y|}\right|(x,y)\in\mathcal A(M_{\overline H})\right\rbrace.
\]
Now everything follows by Proposition \ref{pro2}.\ref{pro2.2}.
 \item Obviously, we have $\mathsf d(z,z')\geq\mathsf d(\pi_\simeq(z),\pi_\simeq(z'))$ for all $z,\,z'\in\mathsf Z(H)$. Thus we find $\mathsf t(H)\geq\mathsf t(\overline H)$ by Definition \ref{def-tame}.
\qedhere
\end{enumerate}
\end{proof}

\end{document}